\DeclareMathAlphabet{\mathpzc}{OT1}{pzc}{m}{it}
\definecolor{dullmagenta}{rgb}{0.4,0,0.4}   
\definecolor{darkblue}{rgb}{0,0,0.4}
\definecolor{darkgreen}{rgb}{0,0.4,0}
\newtheorem{TheoremLetter}{Theorem}
{}
\newtheorem{definition}{Definition}
\newtheorem*{definition*}{Definition}
\newtheorem{theorem}{Theorem}
\newtheorem*{theorem*}{Theorem}
\newtheorem*{conjecture*}{Conjecture}
\newtheorem*{question*}{Question}
\newtheorem{lemma}[theorem]{Lemma}
\newtheorem*{lemma*}{Lemma}
\newtheorem{corollary}[theorem]{Corollary}
\newtheorem*{corollary*}{Corollary}
\newtheorem{remark}[theorem]{Remark}
\newtheorem*{remark*}{Remark}
\numberwithin{equation}{section}
\numberwithin{theorem}{section}
\newcommand{\customlabel}[2]{%
   \protected@write \@auxout {}{\string \newlabel {#1}{{#2}{\thepage}{#2}{#1}{}} }%
   \hypertarget{#1}{#2}
}
\def\XXint#1#2#3{{\setbox0=\hbox{$#1{#2#3}{\int}$}
     \vcenter{\hbox{$#2#3$}}\kern-.5\wd0}}
\newcommand{\supp}{\operatorname{supp}}
\newcommand{\sh}{\operatorname{sh}}
\newcommand{\ApproxCarleson}{\operatorname{ApproximateCarleson}}
\newcommand{\car}{\operatorname{Carleson}}
\newcommand{\sparse}{\operatorname{Sparse}}
\begin{document}
\thanks{Supported by Grant MICIN/AEI/PID2019-105599GB-I00}
\address{Universidad Aut\'onoma de Madrid}
\author{Guillermo Rey}
\email{guillermo.rey@uam.es}
\title{Greedy approximation algorithms for sparse collections}
\begin{abstract}
  We describe a greedy algorithm that approximates the Carleson constant of a collection of general sets.
  The approximation has a logarithmic loss in a general setting, but is optimal up to a constant with only mild geometric assumptions.
  The constructive nature of the algorithm gives additional information about the almost-disjoint structure of sparse collections.

  As applications, we give three results for collections of axis-parallel rectangles in every dimension.
  The first is a constructive proof of the equivalence between Carleson and sparse collections, first shown by H\"anninen.
  The second is a structure theorem proving that every collection $\mathcal{E}$ can be partitioned into $\mathcal{O}(N)$ sparse subfamilies
  where $N$ is the Carleson constant of $\mathcal{E}$.
  We also give examples showing that such a decomposition is impossible when the geometric assumptions are dropped.
  The third application is a characterization of the Carleson constant involving only $L^{1,\infty}$ estimates.
\end{abstract}
\maketitle

\section{Introduction}
\label{section:intro}

Consider a collection $\mathcal{E}$ of measurable sets in $\mathbb{R}^d$.
We say that $\mathcal{E}$ is $\eta$-sparse if for every
$R$ in $\mathcal{E}$ there exists a subset $E(R) \subseteq R$ satisfying $|E(R)| \geq \eta |R|$ such that the family $\{E(R)\}$ is pairwise-disjoint.
The number $\eta$ quantifies how much overlap exists in $\mathcal{E}$ in a scale-invariant way. In particular, the closer $\eta$ is to $1$ the closer $\mathcal{E}$ is to being pairwise-disjoint.
A closely related quantity is the Carleson constant of $\mathcal{E}$.
For any collection $\mathcal{F}$ let $\sh(\mathcal{F}) = \bigcup_{R \in \mathcal{F}} R$ be its \emph{shadow},
then we say that $\mathcal{E}$ satisfies the Carleson condition with constant $C$ if
\begin{align*}
  \sum_{R \in \mathcal{F}} |R| \leq C |\sh(\mathcal{F})|
\end{align*}
for all subcollections $\mathcal{F} \subseteq \mathcal{E}$.
The best constant in the inequality above is usually called the Carleson constant of $\mathcal{E}$.

These notions have been used
extensively in harmonic analysis, for example in connection
with the boundedness of maximal functions (cf. \cite{CordobaZygmundsConjecture}, \cite{cf}). In recent years they have also
gained a lot of attention for their applications to weighted
inequalities, we direct the interested reader to \cite{pereyra_sparse} for a nice review in this
direction.

It is very easy to see that $\eta$-sparse collections satisfy the Carleson condition with constant $\eta^{-1}$.
With more work one can show that the converse is also true when $\mathcal{E}$ consists of dyadic intervals (or squares, cubes, etc.), see Lemma 6.3 in \cite{LNBOOK}.
This can be done exploiting the strong nestedness property of dyadic intervals, and in fact this structural property allows one to explicitly find the sets $E(R)$ in the definition above.
In particular, in the (one-parameter) dyadic setting the Carleson condition becomes \emph{local}, being equivalent to
\begin{align*}
  \sum_{\substack{S \in \mathcal{E}\\ S \subseteq R}} |S| \leq C |R|
\end{align*}
for all $R$ in $\mathcal{E}$.

This locality is sadly lacking in general, failing even in the two-parameter setting where, instead of dyadic intervals, one works with collections consisting of axis-parallel dyadic rectangles.
This was shown in \cite{carleson1974counter} with what is now known as Carleson's counterexample, see \cite{tao_carleson_counter} or \cite{muscalu_schlag}.

The equivalence between Carleson and sparse collections was first shown by T. H\"anninen in \cite{Hanninen}, who adapted some ideas of L. Dor from \cite{dor} to prove the existence of the
sets $E(R)$ in the definition. The proof, which at its core uses a convexity argument together with Hahn-Banach's separation theorem, is strikingly clean but gives no clue
about \emph{how} the sets $\{E(R)\}$ can be found or about their structure. A more geometric proof was later found in \cite{BarronThesis}, but this proof is also non-constructive.

The main purpose of this article is to describe a greedy algorithm that is able to construct the sets $\{E(R)\}$ for any Carleson collection $\mathcal{E}$.
With no geometric assumptions on $\mathcal{E}$ the algorithm has a logarithmic loss,
but if one imposes some geometric structure then the algorithm provides sets that are
optimal up to an absolute constant.
The constructive nature of our methods allows us to prove a structural theorem about sparse collections with only mild geometric assumptions,
for example valid for axis-parallel rectangles in every dimension.

Before stating the main results, let us begin with some definitions.
We will frequently use the words \emph{collection} and \emph{family} to mean an unordered sequence,
instead of the usual definition of \emph{set}. In particular we allow repeated elements.
For simplicity, all of our collections will be assumed finite.

We can define the Carleson constant of $\mathcal{E}$ with respect to a measure $\mu$ as
\begin{align}\label{def:car}
  \|\mathcal{E}\|_{\car(\mu)} = \sup \Biggl\{ \frac{1}{\mu(\sh(\mathcal{F}))} \sum_{R \in \mathcal{F}} \mu(R) : \mathcal{F} \subseteq \mathcal{E}\Biggr\}.
\end{align}
We will write just $\|\mathcal{E}\|_{\car}$ when $\mu$ is the Lebesgue measure.

For general measures, the straightforward generalization of sparse collection is not equivalent to the Carleson condition above (one needs $\mu$ to have no point masses).
This can be readily seen with the example
\begin{align*}
  \mathcal{E} = \{\underbrace{1, \dots, 1}_{N\text{ times}}\} \qquad \mu = \text{Counting measure}.
\end{align*}

Instead, we can extend it as follows
\begin{definition} \label{def:sparse}
  We say that $\mathcal{E}$ is
  $\eta$-sparse with respect to the measure $\mu$ if one can find non-negative functions $\varphi_R \geq 0$ for each $R$ in $\mathcal{E}$ such that
  \begin{align}
    \int_R \varphi_R &\geq \eta \mu(R) \label{def:sparse:1}\\
    \sum_R \varphi_R &\leq 1. \label{def:sparse:2}
  \end{align}
  We call the best constant $\eta$ above the sparse constant of $\mathcal{E}$ with respect to $\mu$, that is:
  \begin{align*}
    \|\mathcal{E}\|_{\sparse(\mu)} = \sup\{\eta \geq 0:\, \text{$\mathcal{E}$ is $\eta$-sparse with respect to $\mu$}\}.
  \end{align*}
\end{definition}
This is only a slight generalization of the previous definition (one can just take $\varphi_R = \mathbbm{1}_{E(R)}$ to recover the original).
In fact, when the measure $\mu$ has no point masses, one can use a convexity argument like Lemma 2.3 from \cite{dor} to show that the definitions are equivalent.
With this notation the Carleson-sparse equivalence becomes
\begin{align*}
  \|\mathcal{E}\|_{\car(\mu)} \|\mathcal{E}\|_{\sparse(\mu)} = 1.
\end{align*}

We are now ready to describe our algorithm and main results.

Suppose one were to compute $\|\mathcal{E}\|_{\car}$ directly with \eqref{def:car}.
The definition involves computing a certain sum for each of the subcollections $\mathcal{F} \subseteq \mathcal{E}$,
so the process quickly becomes intractable as the cardinality of $\mathcal{E}$ grows.

One could instead try to find functions $\varphi_R$ as in Definition \ref{def:sparse}.
However, this approach quickly runs into problems since the two conditions \eqref{def:sparse:1} and \eqref{def:sparse:2} are in direct opposition.
Namely, \eqref{def:sparse:1} requires that the average of each $\varphi_R$ be at least $\eta$, which together with \eqref{def:sparse:2} means
that each $\varphi_R$ cannot be much smaller than $\eta$ on a large portion of $R$. But also, \eqref{def:sparse:2} implies that the functions
cannot all be larger than $\eta$ at the same place.

If $\mathcal{E}$ consists of only two elements $\{R_1, R_2\}$ then the problem becomes very easy: one can just set $\varphi_{R_i}$ to be
$1$ on the symmetric difference of $R_1$ and $R_2$ and then equitably distribute the mass in $R_1 \cap R_2$ among the two in proportion to their masses.
This could lead to an induction algorithm, but one easily sees that, even with simple examples, earlier choices of the functions $\varphi_R$ can
make the choice of the $(n+1)$-th function impossible, especially when the earlier choices do not take the global situation into account.

This suggests that we choose $\varphi_R$ in a way that guarantees, independently of the choice of $\varphi_S$ for $S \neq R$, that the sum of all
the functions remains bounded by $1$. In particular, we would like to find $R$ so that arbitrarily solving the sub-problem for $\mathcal{E}\setminus R$
still leaves space to choose $\varphi_R$ appropriately.

We are not able to do this in general without at least some geometric information about $\mathcal{E}$. However, we \emph{can} find $R$ so that, if we
choose the following functions in a special way (independent of the choice of $\varphi_R$), then there always exists a
choice of $\varphi_R$ that is valid up to a logarithmic factor. In particular, this strategy leads to functions $\varphi_R$ satisfying
\begin{align*}
  \int_R \varphi_R \,d\mu \gtrsim \frac{\eta}{\log\eta^{-1}}\mu(R),
\end{align*}
where $\eta = \|\mathcal{E}\|_{\sparse(\mu)}$.

In order to remove the logarithmic loss we can use the maximal operator associated to $\mathcal{E}$:
\begin{align*}
  \mathcal{M}_{\mathcal{E}}^{\mu} f = \sup_{R \in \mathcal{E}} \mathbbm{1}_R \frac{1}{\mu(R)} \int_R |f|\, d\mu.
\end{align*}
The geometric condition alluded to previously is related to the restricted weak-type boundedness of $\mathcal{M}_{\mathcal{E}}^\mu$.
In particular, if there exists an $\eta < 1$ such that
\begin{align}\label{rwt}
  \mu\bigl(\bigl\{ x:\, \mathcal{M}_{\mathcal{E}}^\mu(\mathbbm{1}_E)(x) > \eta \bigr\}\bigr) \leq M \mu(E)
\end{align}
for all sets $E$, then the strategy described above leads to an algorithm that finds functions $\varphi_R$ satisfying
\begin{align*}
  \int_R \varphi_R \,d\mu \gtrsim \|\mathcal{E}\|_{\sparse(\mu)} \mu(R),
\end{align*}
where the implied constant depends only on $M$.

The inequality \eqref{rwt} would follow from the the restricted weak-type $(1,1)$ of $\mathcal{M}_{\mathcal{E}}$, so for example it holds
for the Lebesgue measure when $\mathcal{E}$ consists of axis-parallel rectangles, cubes, balls, etc.
One can also consider other measures,
for example in the one-parameter dyadic case $\mathcal{M}^\mu$ is weak-type $(1,1)$-bounded for any measure $\mu$, so
\eqref{rwt} is true whenever $\mathcal{E}$ consists of dyadic intervals (or squares, cubes, etc.)
In two or more parameters the weak-type fails for general measures, but does hold when $d\mu(x) = w(x)dx$ and $w$ is a strong $A_\infty$ weight,
as shown by R. Fefferman in \cite{fefferman_strong_diff}.

Our algorithm finds the functions $\varphi_R$ in Definition \ref{def:sparse} assuming only that \eqref{rwt} holds,
so it immediately gives a constructive proof of the equivalence between the sparse and Carleson conditions.
Without \eqref{rwt} we can constructively prove the equivalence, but only up to a logarithmic factor.

As another application of the algorithm we can prove the following structural property of sparse collections
\begin{TheoremLetter} \label{SplitTheorem}
  Let $\mathcal{E}$ be a collection of sets and suppose $\eqref{rwt}$ holds.
  Then there exists a partition into $\mathcal{O}(\|\mathcal{E}\|)$ subcollections $\{\mathcal{E}_i\}$
  satisfying
  \begin{align*}
    \|\mathcal{E}_i\|_{\car(\mu)} \lesssim 1.
  \end{align*}
\end{TheoremLetter}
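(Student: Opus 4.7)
The plan is to establish the theorem by iterated extraction, using the greedy algorithm from earlier in the paper as a subroutine. The key reduction is the following one-step lemma: for any family $\mathcal{G}$ satisfying \eqref{rwt}, there exists a subfamily $\mathcal{G}_0 \subseteq \mathcal{G}$ with $\|\mathcal{G}_0\|_{\car(\mu)} \lesssim 1$ and $\|\mathcal{G} \setminus \mathcal{G}_0\|_{\car(\mu)} \leq \|\mathcal{G}\|_{\car(\mu)} - 1$. Iterating this lemma $\lceil \|\mathcal{E}\|_{\car(\mu)} \rceil$ times produces the desired partition into $\mathcal{O}(\|\mathcal{E}\|)$ classes.

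My candidate construction for $\mathcal{G}_0$ is an enlargement-greedy extraction. Run the main algorithm on $\mathcal{G}$ to fix an order of its elements, then sweep them in that order while maintaining a set $B \subseteq \mathbb{R}^d$ (initially empty). At each $R$, if $\mu(R \setminus B) \geq (1-\eta) \mu(R)$, include $R$ in $\mathcal{G}_0$ and update $B \leftarrow B \cup \Enl(R)$, where $\Enl(R) = \{x: \mathcal{M}_{\mathcal{G}}^\mu \mathbbm{1}_R(x) > \eta\}$; otherwise skip $R$. The bound $\|\mathcal{G}_0\|_{\car(\mu)} \leq 1/(1-\eta)$ then follows by a telescoping argument: the selection criterion ensures each chosen $R$ contributes at least $(1-\eta)\mu(R)$ of genuinely new shadow, which is bounded above by $\mu(\sh(\mathcal{G}_0))$. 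Condition \eqref{rwt} enters through the bound $\mu(\Enl(R)) \leq M\mu(R)$, ensuring the enlarged $B$ remains proportional to what has been selected.

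The counterpart — showing that $\|\mathcal{G} \setminus \mathcal{G}_0\|_{\car(\mu)}$ has dropped by $1$ — is the decisive point. The heuristic is that every skipped $R$ satisfies $\mu(R \cap B) \geq \eta \mu(R)$, so it is $\eta$-covered by the $\Enl$'s of earlier selections; morally $\mathcal{G}_0$ has stripped off the top layer of the Carleson stack, in analogy with removing the maximal elements in the dyadic case. Making this quantitative requires, given any near-extremal $\mathcal{F} \subseteq \mathcal{G} \setminus \mathcal{G}_0$, exhibiting an augmenting subfamily $\mathcal{G}_0' \subseteq \mathcal{G}_0$ whose total mass is at least $\mu(\sh(\mathcal{F}))$ while leaving the joint shadow essentially equal to $\sh(\mathcal{F})$; combined with the Carleson bound of $\mathcal{G}$ applied to $\mathcal{F} \cup \mathcal{G}_0'$, this yields the required $-1$ improvement.

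The main obstacle is precisely this augmentation argument. A naive construction (such as taking maximal elements as $\mathcal{G}_0$) fails already in the dyadic setting when extremal subcollections concentrate inside a small portion of the shadow, because the augmenting elements then blow up the shadow of $\mathcal{F} \cup \mathcal{G}_0'$. The restricted weak-type hypothesis \eqref{rwt} is exactly what tames this: by keeping $\mu(\Enl(R))$ proportional to $\mu(R)$, it forces the augmentation to stay within a controlled enlargement of $\sh(\mathcal{F})$, so the ratio $\sum \mu(R)/\mu(\sh)$ does improve by a full unit. Without \eqref{rwt} the same strategy can only be pushed through with a logarithmic loss per iteration, consistent with the general-case bound discussed in the introduction and insufficient for the linear $\mathcal{O}(\|\mathcal{E}\|)$ bound claimed here.
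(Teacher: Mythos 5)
There is a genuine gap, and you identify it yourself: the entire weight of your argument rests on the ``one-step lemma'' that the remainder satisfies $\|\mathcal{G}\setminus\mathcal{G}_0\|_{\car(\mu)} \leq \|\mathcal{G}\|_{\car(\mu)} - 1$, and this is never proved. The first half of your construction is fine: since $\Enl(R) \supseteq R$, your selection rule makes $\mathcal{G}_0$ a $(1-\eta)$-sparse family, so $\|\mathcal{G}_0\|_{\car(\mu)} \leq (1-\eta)^{-1}$. But the ``augmentation'' step is only a heuristic. Knowing that every skipped $S$ satisfies $\mu(S \cap B) > \eta\mu(S)$, where $B$ is a union of enlargements of selected sets, does not let you convert a near-extremal $\mathcal{F} \subseteq \mathcal{G}\setminus\mathcal{G}_0$ into a family $\mathcal{F} \cup \mathcal{G}_0'$ whose mass increases by $\mu(\sh(\mathcal{F}))$ while its shadow stays comparable to $\sh(\mathcal{F})$: the selected sets responsible for covering $S$ via their enlargements need not lie inside (or even near, in measure) $\sh(\mathcal{F})$, and \eqref{rwt} only controls $\mu(\Enl(R))$ by $M\mu(R)$, not the location of $R$ relative to $\sh(\mathcal{F})$. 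So the claimed unit decrement per extraction --- the decisive point for getting $\mathcal{O}(\|\mathcal{E}\|_{\car(\mu)})$ classes by iteration --- is unsubstantiated, and it is not clear the layer-stripping scheme can be repaired along these lines.

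For comparison, the paper does not induct on the Carleson constant at all. It runs Algorithm \ref{maximal:algo} once and reverses the removal order to get a total order $<$ satisfying \eqref{split:good_order}, i.e.\ for each $R$ the set where $h_{\mathcal{E}_{\leq R}} > 2M\|\mathcal{E}\|_{\car(\mu)}$ occupies at most an $\eta$-fraction of $R$. It then distributes all elements, in this order and in a single pass, into $N \approx \alpha^{-1}2M\|\mathcal{E}\|_{\car(\mu)}$ buckets, maintaining the invariant \eqref{split:condition} that each inserted set has at least a $\gamma$-fraction outside the current shadow of its bucket; an averaging/pigeonhole argument shows that if no bucket could accept $R$, then more than an $\eta$-fraction of $R$ would be covered by at least $\alpha N$ bucket shadows, forcing $h_{\mathcal{E}_{\leq R}} > 2M\|\mathcal{E}\|_{\car(\mu)}$ there and contradicting \eqref{split:good_order}. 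Each bucket is then $\gamma$-sparse directly from \eqref{split:condition} (this is the $(P_1)$ condition), which gives the theorem with all constants explicit. If you want to salvage your write-up, the shortest route is to abandon the extraction-by-layers reduction and prove instead that a single greedy distribution into sufficiently many buckets never gets stuck, which is exactly where the order produced by the algorithm and hypothesis \eqref{rwt} are used.
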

This result is proven as a special case of Theorem \ref{split:theorem}, which is a more precise version where we track all the constants.
As an application of Theorem \ref{split:theorem} we can draw a connection with the notion of $(P_1)$ sequence introduced in \cite{cf}.
In particular, we can show that, under the same geometric hypothesis of Theorem \ref{SplitTheorem}, every Carleson collection
can be split into a finite number of $(P_1)$ sequences. We will defer the definition of $(P_1)$ sequence until section \ref{section:split}
where this connection is explained in Remark \ref{p1remark}.

We also show that there are situations where such an splitting is impossible in the absence of an estimate on $\mathcal{M}_{\mathcal{E}}$.
In particular we have, for the Lebesgue measure on $\mathbb{R}$:
\begin{TheoremLetter} \label{NoSplitTheorem}
  For every $\Lambda \geq 2$ and every integer $N \geq 1$ there exists a collection $\mathcal{E}$ of subsets of $\mathbb{R}$ with $\|\mathcal{E}\|_{\car} \leq \Lambda$
  such that for any partition
  \begin{align*}
    \mathcal{E} = \mathcal{E}_1 \cup \dots \cup \mathcal{E}_N
  \end{align*}
  there exists at least one $i \in \{1, \dots, N\}$ for which $\|\mathcal{E}_i\|_{\car} \gtrsim \Lambda$.
\end{TheoremLetter}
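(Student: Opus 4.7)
The plan is to produce the required collection explicitly and very concretely. Fix $M = \lceil 10 N \Lambda \rceil$, let $B = [0, 1 - 1/\Lambda]$ serve as a common ``base'', and let $I_1, \dots, I_M$ be pairwise disjoint intervals of length $1/\Lambda$ placed outside $B$. The collection $\mathcal{E} := \{R_k\}_{k=1}^{M}$ with $R_k = B \cup I_k$ consists of $M$ sets of Lebesgue measure $1$ sharing the heavy base $B$ but having individual disjoint tails $I_k$.

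To estimate the Carleson constant I would simply compute directly: for any subfamily $\mathcal{F} \subseteq \mathcal{E}$ of cardinality $k$, the disjointness of the tails gives
\begin{align*}
  \sum_{R \in \mathcal{F}} |R| = k \qquad \text{and} \qquad |\sh(\mathcal{F})| = |B| + k/\Lambda = \frac{\Lambda - 1 + k}{\Lambda},
\end{align*}
so the ratio equals $k\Lambda/(\Lambda - 1 + k)$, which is increasing in $k$ and strictly smaller than $\Lambda$ for every finite $k$. Hence $\|\mathcal{E}\|_{\car} \leq \Lambda$. The point to emphasize is that the ratio only approaches the bound $\Lambda$ once $k$ is much larger than $\Lambda$; any small subfamily has a much smaller Carleson constant.

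The non-splitting property is then a one-line pigeonhole argument: given any partition $\mathcal{E} = \mathcal{E}_1 \cup \dots \cup \mathcal{E}_N$, some $\mathcal{E}_i$ has cardinality at least $M/N \geq 10\Lambda$, and applying the same computation to $\mathcal{F} = \mathcal{E}_i$ yields
\begin{align*}
  \|\mathcal{E}_i\|_{\car} \geq \frac{(M/N)\Lambda}{\Lambda - 1 + M/N} \geq \frac{10\Lambda^2}{\Lambda + 10\Lambda} = \frac{10\Lambda}{11} \gtrsim \Lambda.
\end{align*}
There is no real obstacle to overcome: the mechanism of the counterexample is precisely that the Carleson constant of this \emph{base-plus-tail} model is built up only by accumulating many sets, so any partition into a bounded number of pieces must leave at least one piece still large enough to saturate the bound, in stark contrast with Theorem \ref{SplitTheorem} where the estimate on $\mathcal{M}_{\mathcal{E}}$ prevents this kind of ``communal overlap'' pathology.
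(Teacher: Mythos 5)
Your proposal is correct and follows essentially the same route as the paper: a family of sets sharing a common base of measure comparable to $1$ with pairwise-disjoint tails of measure comparable to $1/\Lambda$, the upper bound $\|\mathcal{E}\|_{\car}\leq\Lambda$ coming from the disjoint tails, and the lower bound for one piece of any partition coming from the pigeonhole principle applied to a family of $M\gg N\Lambda$ such sets. The only differences are cosmetic normalizations (base of measure $1-1/\Lambda$ versus $1$, tails of length $1/\Lambda$ versus $(\Lambda-1)^{-1}$) and that you verify the Carleson bound by direct computation of the ratio rather than by exhibiting the sparse sets, both of which are equally valid here.
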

Even if we restrict $\mathcal{E}$ to consist of only dyadic rectangles we can produce an example similar to this last one, but for a specially designed measure $\mu$
for which \eqref{rwt} does not hold.

Another application of the algorithms described in this article is that we can weaken the definition of the Carleson condition
to require only weak-type instead of strong $L^1$ estimates.
\begin{TheoremLetter} \label{weaktype11}
  Suppose \eqref{rwt} holds and let $M < \infty$ be the constant in the inequality.
  Then
  \begin{align*}
    \|\mathcal{E}\|_{\car(\mu)} \lesssim \sup\Bigl\{ \frac{1}{\mu(\mathcal{F})} \Bigl\|\sum_{R \in \mathcal{F}} \mathbbm{1}_R\Bigr\|_{L^{1,\infty}(\mu)} :\, \mathcal{F} \subseteq \mathcal{E}\Bigr\},
  \end{align*}
  where the implied constant depends only on $M$.
\end{TheoremLetter}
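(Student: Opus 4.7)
The plan is a self-improvement (absorption) argument for $C := \|\mathcal{E}\|_{\car(\mu)}$. Denote by $K$ the right-hand supremum in the statement. Since $\mathcal{E}$ is finite, $C < \infty$ a priori, so we may legitimately absorb a $C$-term when it appears on the right-hand side of an inequality. The core idea is to split an extremal subcollection $\mathcal{F}$ according to whether its sets concentrate on the high level set of $\sum_{R \in \mathcal{F}} \mathbbm{1}_R$: the low part is controlled directly by $K$, and the high part by the Carleson constant applied to a subcollection with a much smaller shadow, using \eqref{rwt}.

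Concretely, fix $\mathcal{F} \subseteq \mathcal{E}$ and $\lambda > 0$, set $E_\lambda = \{x : \sum_{R \in \mathcal{F}} \mathbbm{1}_R(x) > \lambda\}$, and note that $\mu(E_\lambda) \leq K\mu(\sh(\mathcal{F}))/\lambda$ by the weak-type hypothesis. Let $\eta < 1$ be the constant from \eqref{rwt}, and split $\mathcal{F}$ via
$$\mathcal{G}_\lambda = \{R \in \mathcal{F} : \mu(R \cap E_\lambda) > \eta\mu(R)\}.$$
For $R \in \mathcal{G}_\lambda$ the average of $\mathbbm{1}_{E_\lambda}$ on $R$ exceeds $\eta$, so $R \subseteq \{\mathcal{M}_\mathcal{E}^\mu \mathbbm{1}_{E_\lambda} > \eta\}$; hence \eqref{rwt} yields $\mu(\sh(\mathcal{G}_\lambda)) \leq M\mu(E_\lambda) \leq MK\mu(\sh(\mathcal{F}))/\lambda$, and the Carleson bound on the subcollection $\mathcal{G}_\lambda$ gives $\sum_{R \in \mathcal{G}_\lambda} \mu(R) \leq CMK\mu(\sh(\mathcal{F}))/\lambda$. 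For $R \in \mathcal{F} \setminus \mathcal{G}_\lambda$, at least a $(1-\eta)$-fraction of $R$'s mass lies in $\sh(\mathcal{F}) \setminus E_\lambda$, where $\sum_{R \in \mathcal{F}} \mathbbm{1}_R \leq \lambda$; integrating this bound,
$$(1-\eta)\sum_{R \in \mathcal{F} \setminus \mathcal{G}_\lambda} \mu(R) \leq \int_{\sh(\mathcal{F}) \setminus E_\lambda} \sum_{R \in \mathcal{F}} \mathbbm{1}_R \, d\mu \leq \lambda\mu(\sh(\mathcal{F})).$$

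Adding the two estimates and taking the supremum over $\mathcal{F}$ produces the self-improvement inequality $C \leq \lambda/(1-\eta) + CMK/\lambda$. Choosing $\lambda = 2MK$ absorbs the $C$-term on the right and yields $C \leq 4MK/(1-\eta) \lesssim K$, with the implied constant depending only on $M$ and on the fixed $\eta$ from \eqref{rwt}. The main obstacle is precisely the absorption step: it requires $C$ to be finite beforehand, which is granted here by the finiteness of $\mathcal{E}$ but would otherwise demand a truncation argument. A minor technical point is that \eqref{rwt} must apply to arbitrary indicator functions (not only of shadows), since $E_\lambda$ is a generic level set; this is exactly how the hypothesis is stated in the excerpt.
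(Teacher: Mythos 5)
Your proof is correct, but it takes a genuinely different route from the paper. The paper obtains Theorem \ref{weaktype11} as a by-product of its machinery: it observes that Algorithm \ref{maximal:algo} only ever uses the weak-type quantity $\Lambda^{1,\infty}_\mu(\mathcal{E}_{\succeq R})$ from \eqref{maximal:selection_lemma:wt}, so the output constant $A$ is bounded by the right-hand supremum, and then Corollary \ref{maximal:coro} (i.e.\ the selection lemma plus the ordering argument of Theorem \ref{maximal:general_thm}) gives $\|\mathcal{E}\|_{\car(\mu)} \leq 2(1-\eta)^{-1}MA$. You instead give a direct, non-algorithmic absorption argument: split an arbitrary $\mathcal{F}$ at height $\lambda$, control the sets concentrating on the high level set $E_\lambda$ via \eqref{rwt} and the (a priori finite) Carleson constant $C$, control the rest by integrating $h_{\mathcal{F}}\le\lambda$ off $E_\lambda$, and then choose $\lambda\sim MK$ to absorb. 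All the individual steps check out: $R\in\mathcal{G}_\lambda$ indeed forces $R\subseteq\{\mathcal{M}^\mu_{\mathcal{E}}\mathbbm{1}_{E_\lambda}>\eta\}$, the hypothesis \eqref{rwt} does apply to arbitrary measurable $E$, the Carleson constant of a finite collection is trivially at most $\#\mathcal{E}<\infty$ so the absorption is legitimate, and the degenerate case $K=0$ is vacuous. What the two approaches buy is different: yours is shorter, self-contained, and purely quantitative ($C\le 4(1-\eta)^{-1}MK$, comparable to the paper's $2(1-\eta)^{-1}M$ bound, and like the paper's its constant also depends on the fixed $\eta$, not just $M$), but it is non-constructive in the sense that it never exhibits the sparse witness functions or the ordering; the paper's route is longer but produces exactly that constructive structure, which is the main point of the article, and extends to infinite collections without needing the truncation step you correctly flag as the price of absorption.
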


In section \ref{section:general} we describe a general algorithm to approximate the Carleson constant of a collection following the strategy described here.

In section \ref{section:maximal} we show how to modify the algorithm from the previous section to remove the logarithmic loss, conditional on inequality \eqref{rwt}.
Theorem \ref{weaktype11} is proven at the end of this section.

Finally, in section \ref{section:split} we prove Theorem \ref{SplitTheorem} by inductively constructing said partition, and prove Theorem \ref{NoSplitTheorem} as well as
the dyadic version with explicit examples.

\section{An algorithm for general collections}
\label{section:general}

In this section $\mu$ will always denote a fixed positive measure.
All sets will also be assumed to be of finite $\mu$-measure.
For any collection $\mathcal{E}$ of sets define its \emph{height function}
\begin{align*}
  h_{\mathcal{E}} = \sum_{R \in \mathcal{E}} \mathbbm{1}_R.
\end{align*}
Carleson's condition asserts a uniform bound on the average height of all subcollections. Indeed, if we denote the average height by
\begin{align*}
  \Lambda_\mu(\mathcal{E}) = \frac{1}{\mu(\sh(\mathcal{F}))}\int h_{\mathcal{E}} \, d\mu,
\end{align*}
then Carleson's condition becomes $\|\mathcal{E}\|_{\car(\mu)} = \sup\bigl\{ \Lambda_\mu(\mathcal{F}):\, \mathcal{F} \subseteq \mathcal{E}\bigr\}$.

The next lemma is the main iteration step in our algorithm.
\begin{lemma} \label{general:selection_lemma}
  Let $\mathcal{E}$ be a collection of sets and suppose $\Lambda := \Lambda_\mu(\mathcal{E}) < \infty$.
  Define\footnote{Here and throughout we will take the convention
  that $\frac{0}{0} = 0$.} for every $R$ in $\mathcal{E}$
  \begin{align*}
    g_R &:= \frac{\mathbbm{1}_R}{h_{\mathcal{E}}} \mathbbm{1}_{\{x:\, h_{\mathcal{E}}(x) \leq 2\Lambda\}},
  \end{align*}

  Then there exists at least one $R$ in $\mathcal{E}$ such that
  \begin{align} \label{general:selection_lemma:claim}
    \int_R g_R \, d\mu \geq \frac{1}{2\Lambda}|R|.
  \end{align}
\end{lemma}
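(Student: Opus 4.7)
The plan is to argue by contradiction via a simple averaging argument: if \eqref{general:selection_lemma:claim} fails for every $R$, then summing over $R \in \mathcal{E}$ gives a strict upper bound on $\sum_R \int_R g_R\,d\mu$, but Chebyshev's inequality applied to $h_{\mathcal{E}}$ gives a matching lower bound, producing a contradiction.

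Concretely, I would first compute $\sum_{R \in \mathcal{E}} \int_R g_R\,d\mu$ directly by swapping sum and integral. Using the definition of $g_R$ together with the convention $0/0 = 0$, one has
\begin{align*}
  \sum_{R \in \mathcal{E}} \mathbbm{1}_R \, g_R = \frac{h_{\mathcal{E}}}{h_{\mathcal{E}}} \mathbbm{1}_{\{h_{\mathcal{E}} \leq 2\Lambda\}} = \mathbbm{1}_{\{0 < h_{\mathcal{E}} \leq 2\Lambda\}},
\end{align*}
so that
\begin{align*}
  \sum_{R \in \mathcal{E}} \int_R g_R\,d\mu = \mu(\sh(\mathcal{E})) - \mu(\{h_{\mathcal{E}} > 2\Lambda\}).
\end{align*}
By Chebyshev's inequality and the definition of $\Lambda$,
\begin{align*}
  \mu(\{h_{\mathcal{E}} > 2\Lambda\}) \leq \frac{1}{2\Lambda}\int h_{\mathcal{E}}\,d\mu = \tfrac{1}{2}\mu(\sh(\mathcal{E})),
\end{align*}
and hence $\sum_{R \in \mathcal{E}} \int_R g_R\,d\mu \geq \tfrac{1}{2}\mu(\sh(\mathcal{E}))$.

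On the other hand, if \eqref{general:selection_lemma:claim} failed for every $R$ (i.e.\ $\int_R g_R\,d\mu < \mu(R)/(2\Lambda)$ for all $R$), then summing and using $\sum_R \mu(R) = \Lambda \, \mu(\sh(\mathcal{E}))$ would give
\begin{align*}
  \sum_{R \in \mathcal{E}} \int_R g_R\,d\mu < \frac{1}{2\Lambda}\sum_{R \in \mathcal{E}} \mu(R) = \tfrac{1}{2}\mu(\sh(\mathcal{E})),
\end{align*}
contradicting the previous estimate. Hence at least one $R$ must satisfy \eqref{general:selection_lemma:claim}.

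There is no real obstacle here: the whole argument is a pigeonhole against Chebyshev, and the only subtle point is to make sure the swap of sum and integral produces the indicator of $\{0 < h_{\mathcal{E}} \leq 2\Lambda\}$ (this is where the $0/0 = 0$ convention is used). The choice of threshold $2\Lambda$ is exactly what makes the Chebyshev half-bound align with the $\tfrac{1}{2\Lambda}$ loss on the other side.
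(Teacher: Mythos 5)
Your proposal is correct and is essentially the paper's argument: both rest on the identity $\sum_R g_R = \mathbbm{1}_{\{0<h_{\mathcal{E}}\le 2\Lambda\}}$, Markov/Chebyshev at level $2\Lambda$ giving the half-measure bound, and a contradiction with the summed failure of \eqref{general:selection_lemma:claim}. The only difference is bookkeeping — the paper first bounds $\mu(G)$ below and then contradicts it, while you compare two bounds on $\sum_R\int_R g_R\,d\mu$ directly — which, if anything, handles the strictness of the inequalities a bit more cleanly.
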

\begin{proof}
  By definition we have
  \begin{align} \label{general:selection_lemma:cc}
    \frac{1}{\Lambda}\int h_{\mathcal{E}} \, d\mu = \mu(\sh(\mathcal{E})).
  \end{align}

  Define the set $G = \{x \in \sh(\mathcal{E}):\, h_{\mathcal{E}}(x) \leq 2\Lambda\}$.
  If $\mu(G) = \mu(\sh(\mathcal{E}))$ we are done, so we can assume that $\mu(G) < \mu(\sh(\mathcal{E}))$.
  Then, from Markov's inequality and \eqref{general:selection_lemma:cc} we can estimate
  \begin{align*}
    \mu(G) &= \mu(\sh(\mathcal{E})) - \mu(\{x:\, h_{\mathcal{E}} > 2\Lambda\}) \\
           &> \mu(\sh(\mathcal{E})) - \frac{1}{2\Lambda}\int h_{\mathcal{E}} \, d\mu \\
           &\geq \frac{1}{2}\mu(\sh(\mathcal{E})),
  \end{align*}
  and hence $\mu(G) > \frac{1}{2}\mu(\sh(\mathcal{E}))$.

  Suppose by way of contradiction that \eqref{general:selection_lemma:claim} fails for all $R$. That, is, for all $R$ in $\mathcal{E}$
  \begin{align*}
    \int_R g_R \, d\mu < \frac{1}{2\Lambda}|R|.
  \end{align*}

  Then
  \begin{align*}
    \mu(G) = \int \mathbbm{1}_G \, d\mu &= \int \sum_{R \in \mathcal{E}} g_R \, d\mu \\
           &< \sum_{R \in \mathcal{E}} \frac{1}{2\Lambda}\mu(R)
           = \frac{1}{2\Lambda} \int h_{\mathcal{E}} \, d\mu \leq \frac{1}{2}\mu(\sh(\mathcal{E})).
  \end{align*}
  This means $\mu(G) < \frac{1}{2}\mu(\sh(\mathcal{E}))$, which is a contradiction.
\end{proof}

If we iterate this lemma we obtain the algorithm described in the introduction.

\begin{algorithm}[H]
  \DontPrintSemicolon
  \caption{$\ApproxCarleson(\mathcal{E})$} \label{general:algo}
  \Begin{
    Set $A = 1$.\;
  }
  \While{
    $\mathcal{E} \neq \emptyset$
  }{
    Set $A = \max(A, \Lambda_\mu(\mathcal{E}))$.\;
    Construct the functions $g_R$ as in Lemma \ref{general:selection_lemma}.\;
    \For{$R \in \mathcal{E}$}{
      \If{$\int g_R \, d\mu\geq \frac{\mu(R)}{2\Lambda_\mu(\mathcal{E})}$}{
        Remove $R$ from $\mathcal{E}$.\;
        Assign $f_R := g_R$ and $\Lambda_R := \Lambda_\mu(\mathcal{E})$.\;
        Go to line 5.\;
      }
    }
  }
  \KwResult{
    The constant $A$, the sequence $\{\Lambda_R\}$, and the functions $\{f_R\}$\;
  }
\end{algorithm}

The purpose of lines 7 through 13 is to find $R$ so that
\begin{align*}
  \int g_R\, d\mu \geq \frac{\mu(R)}{2\Lambda_{\mu}(\mathcal{E})}.
\end{align*}
Lemma \ref{general:selection_lemma} shows that such an $R$ always exists, so the algorithm removes one element from $\mathcal{E}$ at a time and thus always terminates.
The following theorem shows that the approximation of $\|\mathcal{E}\|_{\car(\mu)}$, namely $A$,
is correct up to a logarithm.
\begin{theorem} \label{general:correctness}
  Let $A$ be the constant obtained as the result of running the algorithm on a collection $\mathcal{E}$.
  Then we have
  \begin{align*}
     A \leq \|\mathcal{E}\|_{\car(\mu)} \lesssim A \log(e+A).
  \end{align*}
\end{theorem}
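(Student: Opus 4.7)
The lower bound $A\le \|\mathcal E\|_{\car(\mu)}$ is immediate: every update on line 5 sets $A$ to $\Lambda_\mu(\mathcal E')$ for some subcollection $\mathcal E'\subseteq\mathcal E$, and any such quantity is dominated by $\|\mathcal E\|_{\car(\mu)}$ by the definition of the Carleson constant. The content is the upper bound, which I would prove by showing that the functions $\{f_R\}$ form an approximately sparse family: by construction $\int f_R\,d\mu\ge \mu(R)/(2A)$, and I claim the pointwise bound
\begin{align*}
  \sum_{R\in\mathcal E} f_R(x) \lesssim \log(e+A).
\end{align*}

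Enumerate the elements in the order they are removed as $R_1,R_2,\dots,R_N$, and let $\mathcal E^{(k)}$ be the value of $\mathcal E$ immediately before $R_k$ is removed, so that $\Lambda_{R_k}=\Lambda_\mu(\mathcal E^{(k)})\le A$ and
\begin{align*}
  f_{R_k} = \frac{\mathbbm 1_{R_k}}{h_{\mathcal E^{(k)}}}\,\mathbbm 1_{\{h_{\mathcal E^{(k)}}\le 2\Lambda_{R_k}\}}.
\end{align*}
Fix $x$ and enumerate the indices $k$ for which $x\in R_k$ in increasing order as $k_1<\dots<k_m$. Since $\mathcal E^{(k_i)}=\{R_{k_i},R_{k_i+1},\dots,R_N\}$, the elements of $\mathcal E^{(k_i)}$ containing $x$ are exactly $R_{k_i},R_{k_{i+1}},\dots,R_{k_m}$, so $h_{\mathcal E^{(k_i)}}(x)=m-i+1$. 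Substituting $j=m-i+1$ gives
\begin{align*}
  \sum_{k=1}^N f_{R_k}(x) = \sum_{i=1}^{m}\frac{\mathbbm 1_{\{m-i+1\le 2\Lambda_{R_{k_i}}\}}}{m-i+1} = \sum_{j=1}^{m}\frac{\mathbbm 1_{\{j\le 2\Lambda_{R_{k_{m-j+1}}}\}}}{j}\le \sum_{j=1}^{\lfloor 2A\rfloor}\frac{1}{j}\lesssim \log(e+A),
\end{align*}
where the penultimate step uses $\Lambda_{R_k}\le A$ to kill the indicator whenever $j>2A$.

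Given this pointwise bound, for any $\mathcal F\subseteq\mathcal E$ the selection property combined with the fact that $\sum_{R\in\mathcal F}f_R$ is supported on $\sh(\mathcal F)$ yields
\begin{align*}
  \sum_{R\in\mathcal F}\mu(R)\le 2A\int \sum_{R\in\mathcal F}f_R\,d\mu\lesssim A\log(e+A)\,\mu(\sh(\mathcal F)),
\end{align*}
and taking the supremum over $\mathcal F$ proves $\|\mathcal E\|_{\car(\mu)}\lesssim A\log(e+A)$.

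The main technical obstacle is precisely the harmonic sum that produces the log factor. It is forced by the truncation $\mathbbm 1_{\{h_\mathcal E\le 2\Lambda\}}$ appearing in Lemma \ref{general:selection_lemma}, without which one cannot guarantee the existence of a valid $R$ at each step. Removing this logarithmic loss, which is carried out in Section \ref{section:maximal}, will require replacing the crude truncation by a finer level-set stopping driven by the restricted weak-type hypothesis \eqref{rwt}.
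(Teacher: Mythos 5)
Your proof is correct and follows essentially the same route as the paper: the lower bound is the trivial supremum observation, and the upper bound comes from the pointwise estimate $\sum_R f_R \lesssim \log(e+A)$, obtained by ordering the sets by removal time so that the height of the not-yet-removed collection at $x$ decrements by one along the sets containing $x$, with the truncation $h\le 2\Lambda_{R}\le 2A$ cutting the harmonic sum at $2A$. The only cosmetic difference is that you keep the indicator explicitly and verify the Carleson condition for each $\mathcal{F}$ directly, whereas the paper bounds the cardinality of $\{R:\,x\in\supp f_R\}$ by $2A$ and then invokes the sparse definition via $\varphi_R = f_R/(C\log(e+A))$.
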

\begin{proof}
  The inequality $A \leq \|\mathcal{E}\|_{\car(\mu)}$ is trivial since $A$ is always one of the possible elements in the supremum
  of the definition of $\|\mathcal{E}\|_{\car(\mu)}$.
  Suppose we could show
  \begin{align} \label{general:theorem:good_bound}
    \sum_{R \in \mathcal{E}} f_R &\leq C\log(e+A).
  \end{align}
  Then we could set for each $R$ in $\mathcal{E}$
  \begin{align*}
    \varphi_R := \frac{f_R}{C \log(e+A)}.
  \end{align*}
  Now these functions obviously satisfy $\sum_{R \in \mathcal{E}} \varphi_R \leq 1$ and
  \begin{align*}
    \int_R \varphi_R\, d\mu &= \frac{1}{C \log(e+A)} \int f_R \, d\mu \\
                     &\geq \frac{1}{C \log(e+A)} \frac{1}{2A}
  \end{align*}
  for all $R$ in $\mathcal{E}$. According to Definition \ref{def:sparse}, this would make $\mathcal{E}$ an $\eta$-sparse collection with
  \begin{align*}
    \eta = \frac{1}{2CA\log(e+A)}
  \end{align*}
  and hence $\|\mathcal{E}\|_{\car(\mu)} \leq 2CA\log(e+A)$.

  We now proceed to prove \eqref{general:theorem:good_bound}.
  For any two $R, S \in \mathcal{E}$ set $R \prec S$ if and only if $R$ was removed from $\mathcal{E}$ before $S$ (in line 9). Define
  $\mathcal{E}_{\prec R} = \{S \in \mathcal{E}:\, S \prec R\}$ and $\mathcal{E}_{\succeq R} = \{S \in \mathcal{E}:\, S \succeq R\}$.
  Since $\prec$ is a total order, we have that $\mathcal{E} = \mathcal{E}_{\prec R} \sqcup \mathcal{E}_{\succeq R}$ for each $R$ in $\mathcal{E}$.
  Set
  \begin{align*}
    \mathcal{B}(x) = \{R \in \mathcal{E}:\, x \in \supp f_R\}
  \end{align*}
  and let $(R_1, R_2, R_3, \dots)$ be the elements of $\mathcal{B}$ sorted in increasing order by $\prec$.
  Observe that the cardinatlity $N$ of $\mathcal{B}(x)$ satisfies on the one hand
  \begin{align*}
    N \leq h_{\mathcal{E}_{\succeq R_1}}(x).
  \end{align*}
  On the other hand, if $x$ is in $\supp f_{R_1}$, then $h_{\mathcal{E}_{\succeq R_1}}(x) \leq 2\Lambda_{R_1}$ and thus
  $N \leq 2\Lambda_{R_1} \leq 2A$. So
  \begin{align*}
    \sum_{R \in \mathcal{E}} f_R(x) = \sum_{n=1}^N f_{R_n}.
  \end{align*}
  Note that, by construction we have $f_{R_n}(x) \leq \frac{1}{1+N-n}$, therefore
  \begin{align*}
    \sum_{R \in \mathcal{E}} f_R(x) &\leq \sum_{n=1}^N \frac{1}{1+N-n} \\
                                    &\lesssim \log(e+A)
  \end{align*}
  and we are done.
\end{proof}

\section{An improvement with the maximal function}
\label{section:maximal}

Recall the maximal operator associated to the family $\mathcal{E}$ and the measure $\mu$ from the introduction:
\begin{align*}
  \mathcal{M}_{\mathcal{E}}^\mu f = \sup_{R \in \mathcal{E}}  \frac{\mathbbm{1}_R}{\mu(R)}\int_R |f|\, d\mu.
\end{align*}
The measure $\mu$ will be fixed throughout this section, so we will abbreviate $\mathcal{M}_{\mathcal{E}} f := \mathcal{M}_{\mathcal{E}}^\mu f$.

We will show how Algorithm \ref{general:algo} can be slightly modified to give an essentially-optimal approximation of $\|\mathcal{E}\|_{\car}$ whenever
$\mathcal{M}$ satisfies the condition in \eqref{rwt} which, we recall, was that for a fixed number $0 < \eta < 1$
\begin{align*}
  \mu\bigl(\bigl\{ x:\, \mathcal{M}_{\mathcal{E}}(\mathbbm{1}_E)(x) > \eta \bigr\}\bigr) \leq M \mu(E)
\end{align*}
uniformly over all measurable sets $E$. We will denote by $M_\eta(\mathcal{E})$ the best constant in this inequality (again, dropping the dependence on $\mu$ for simplicity).

In the proof of Theorem \ref{general:correctness} we showed how the logarithmic loss appears with Algorithm \ref{general:algo}.
In particular, dividing by $h_{\mathcal{E}}$ was needed in order to get a reasonably large value of $\int g_R$,
which is where we the logarithm appears as we end up having to sum the harmonic series.
Dividing by a larger function
would make the integral too small, while a smaller one makes bounding $\sum_R g_R$ harder.

Here we take a different approach. The idea is that, if $M_{\eta}(\mathcal{E})$ is finite, there must be a set $R$ in $\mathcal{E}$
that intersects the high level-set of $h_{\mathcal{E}}$ in only a small portion relative to itself.
The next lemma is the main iteration step of the improved algorithm and is in the same spirit as Lemma \ref{general:selection_lemma}.
\begin{lemma} \label{maximal:selection_lemma}
  Suppose that $M_\eta(\mathcal{E}) < \infty$ and
  \begin{align} \label{maximal:selection_lemma:wt}
    \sup_{\lambda >0} \lambda \mu(\{x:\, h_{\mathcal{E}}(x) > \lambda\}) \leq \Lambda\mu(\sh(\mathcal{E})).
  \end{align}
  Then there must exist at least one $R$ in $\mathcal{E}$ satisfying
  \begin{align} \label{maximal:selection_lemma:claim}
    \mu(\{x \in R:\, h_{\mathcal{E}}(x) \leq 2 \Lambda M_\eta(\mathcal{E})\}) \geq (1-\eta) \mu(R).
  \end{align}
\end{lemma}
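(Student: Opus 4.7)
The plan is to argue by contradiction using the weak-type estimate on $h_{\mathcal{E}}$ together with the restricted weak-type hypothesis on $\mathcal{M}_{\mathcal{E}}$. I would introduce the ``bad'' set
\begin{align*}
  E = \{x:\, h_{\mathcal{E}}(x) > 2\Lambda M_\eta(\mathcal{E})\}
\end{align*}
and observe, by the hypothesis \eqref{maximal:selection_lemma:wt} applied at level $\lambda = 2\Lambda M_\eta(\mathcal{E})$, that $\mu(E) \leq \tfrac{1}{2 M_\eta(\mathcal{E})} \mu(\sh(\mathcal{E}))$. The claim \eqref{maximal:selection_lemma:claim} for a given $R$ is equivalent to $\mu(R \cap E) \leq \eta \mu(R)$, so if the lemma failed, then $\mu(R \cap E) > \eta \mu(R)$ for every $R \in \mathcal{E}$.

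The key observation is that this last inequality, reinterpreted, says exactly that the average of $\mathbbm{1}_E$ over any $R \in \mathcal{E}$ exceeds $\eta$. Since every point of $\sh(\mathcal{E})$ belongs to at least one such $R$, this forces
\begin{align*}
  \sh(\mathcal{E}) \subseteq \{x:\, \mathcal{M}_{\mathcal{E}}(\mathbbm{1}_E)(x) > \eta\}.
\end{align*}
Applying the definition of $M_\eta(\mathcal{E})$ then gives $\mu(\sh(\mathcal{E})) \leq M_\eta(\mathcal{E}) \mu(E)$, and combining with the bound on $\mu(E)$ from the previous paragraph produces $\mu(\sh(\mathcal{E})) \leq \tfrac{1}{2} \mu(\sh(\mathcal{E}))$, which is impossible unless $\mu(\sh(\mathcal{E})) = 0$ (a trivial case that can be dispatched immediately).

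There is no real obstacle here; the proof should be short. The only mildly subtle point is the strict-versus-non-strict inequality bookkeeping, i.e. making sure that the negation of \eqref{maximal:selection_lemma:claim} actually yields a strict inequality $\mathcal{M}_{\mathcal{E}}(\mathbbm{1}_E) > \eta$ on all of $\sh(\mathcal{E})$ so that the restricted weak-type hypothesis applies cleanly. This is automatic because the negation of $\mu(R \cap \{h_{\mathcal{E}} \leq 2\Lambda M_\eta(\mathcal{E})\}) \geq (1-\eta)\mu(R)$ is precisely $\mu(R \cap E) > \eta \mu(R)$, which is a strict inequality.
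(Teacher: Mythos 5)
Your proof is correct and follows essentially the same route as the paper: arguing by contradiction, bounding the measure of the bad set $E=\{h_{\mathcal{E}}>2\Lambda M_\eta(\mathcal{E})\}$ via the weak-type hypothesis \eqref{maximal:selection_lemma:wt}, noting that failure of the claim for every $R$ forces $\sh(\mathcal{E})\subseteq\{\mathcal{M}_{\mathcal{E}}(\mathbbm{1}_E)>\eta\}$, and then applying the definition of $M_\eta(\mathcal{E})$ to reach $\mu(\sh(\mathcal{E}))\leq\tfrac{1}{2}\mu(\sh(\mathcal{E}))$. The only difference is cosmetic (your explicit remark on the trivial case $\mu(\sh(\mathcal{E}))=0$ and on the strict-inequality bookkeeping), so there is nothing to fix.
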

\begin{proof}
  To simplify the notation we will abbreviate $M := M_\eta(\mathcal{E})$.
  Suppose \eqref{maximal:selection_lemma:claim} does not hold for any $R$, that is: for every $R$ in $\mathcal{E}$
  \begin{align*}
    \mu(\{x \in R:\, h_{\mathcal{E}}(x) \leq 2M \Lambda\}) < (1-\eta) \mu(R).
  \end{align*}
  Then
  \begin{align*}
    \mu(\{x \in R:\, h_{\mathcal{E}}(x) > 2M \Lambda\}) &= \mu(R) - \mu(\{x \in R:\, h_{\mathcal{E}}(x) \leq 2M \Lambda\}) > \eta\mu(R).
  \end{align*}
  Set $B = \{x \in \sh(\mathcal{E}):\, h_{\mathcal{E}}(x) > 2M\Lambda\}$. This estimate implies
  \begin{align*}
    \mu(B \cap R) > \eta \mu(R) \implies R \subseteq \{\mathcal{M}_{\mathcal{E}}(\mathbbm{1}_B) > \eta\}.
  \end{align*}
  Since \eqref{maximal:selection_lemma:claim} does not hold for any $R$ we in fact have $\sh(\mathcal{E}) \subseteq \{M_{\mathcal{E}}(\mathbbm{1}_B) > \eta\}$.

  By \eqref{maximal:selection_lemma:wt} we can estimate $\mu(B)$ from above as follows:
  \begin{align*}
    \mu(B) \leq \frac{1}{2M\Lambda} \Lambda \mu(\sh(\mathcal{E})) = \frac{\mu(\sh(\mathcal{E}))}{2M}.
  \end{align*}

  Thus, by the finiteness of $M$:
  \begin{align}
    \mu(\sh(\mathcal{E})) &\leq \mu(\{\mathcal{M}_{\mathcal{E}}(\mathbbm{1}_B) > \eta\})
    \stackrel{\eqref{rwt}}{\leq} M \mu(B)
    \leq  \frac{1}{2}\mu(\sh(\mathcal{E})),
  \end{align}
  which is a contradiction.
\end{proof}
Note that by Markov's inequality
\begin{align} \label{maximal:easy}
  \mu(\{x:\, h_{\mathcal{E}}(x) > \lambda\}) \leq \lambda^{-1} \int h_{\mathcal{E}} \,d\mu = \lambda^{-1} \Lambda_{\mu}(\mathcal{E})\mu(\sh(\mathcal{E})).
\end{align}
So in particular condition \eqref{maximal:selection_lemma:wt} holds with $\Lambda \leq \Lambda_{\mu}(\mathcal{E})$.

This lemma shows that one can find a set $R$ in $\mathcal{E}$ with a large subset in which $R$ is guaranteed to have bounded overlap with all the other
sets in $\mathcal{E}$. In particular, if we set
\begin{align} \label{maximal:gf}
  F(R) = \{x \in R:\, h_{\mathcal{E}}(x) \leq 2M_\eta(\mathcal{E})\Lambda^{1,\infty}_\mu(\mathcal{E})\},
\end{align}
where $\Lambda^{1,\infty}_\mu(\mathcal{E})$ is the best constant in \eqref{maximal:selection_lemma:wt}.
Then there must exist at least one $R$ such that $\mu(F(R)) \geq (1-\eta)\mu(R)$.

We can now give the improved version of Algorithm \ref{general:algo}:

\begin{algorithm}[H]
  \DontPrintSemicolon
  \caption{$\ApproxCarleson(\mathcal{E})$ - improved} \label{maximal:algo}
  \Begin{
    Set $A = 1$.
  }
  \While{
    $\mathcal{E} \neq \emptyset$
  }{
    Set $A = \max(A, \Lambda_\mu^{1,\infty}(\mathcal{E}))$.\;
    \For{$R \in \mathcal{E}$}{
      \If{$\mu(F(R)) \geq (1-\eta)\mu(R)$}{
        Remove $R$ from $\mathcal{E}$.\;
        Assign $E(R) := F(R)$ and $\Lambda_R := \Lambda_\mu^{1,\infty}(\mathcal{E})$.\;
        Go to line 5.\;
      }
    }
  }
  \KwResult{
    The constant $A$, the sequence $\{\Lambda_R\}$, and the sets $\{E(R)\}$\;
  }
\end{algorithm}

As in the proof of Theorem \ref{general:correctness}, the order in which elements are removed from $\mathcal{E}$ is important.
Set $R \prec S$ if and only if $R$ was removed before $S$ by Algorithm \ref{maximal:algo}. Set also
\begin{align*}
  \mathcal{E}_{\succeq R} = \{S \in \mathcal{E}:\, S \succeq R\}
\end{align*}
with the natural definition of $\succeq$ in terms of $\prec$.
The important property given by this order is the following inequality for the level sets of $h_{\mathcal{E}}$:
\begin{align} \label{maximal:key_ineq}
  \mu(\{x \in R:\, h_{\mathcal{E}_{\succeq R}}(x) \leq 2M \Lambda^{1,\infty}_{\mu}(\mathcal{E}_{\succeq R}) \}) \geq (1-\eta)\mu(R),
\end{align}
where we have abbreviated $M = M_{\eta}(\mathcal{E})$. Note that $\Lambda^{1,\infty}_{\mu}(\mathcal{E}_{\succeq R}) \leq A \leq \|\mathcal{E}\|_{\car(\mu)}$.

The next theorem shows that estimates like these imply upper bounds on the Carleson constant of $\mathcal{E}$.
\begin{theorem} \label{maximal:general_thm}
  Let $\mathcal{E}$ be a collection totally ordered by some binary relation $\prec$. Suppose that we have
  \begin{align} \label{maximal:upperbound:sufficient}
    \mu(\{x \in R:\, h_{\mathcal{E}_{\succeq R}}(x) \leq \Lambda \}) \geq \eta \mu(R)
  \end{align}
  for all $R$ in $\mathcal{E}$. Then $\|\mathcal{E}\|_{\car(\mu)} \leq \Lambda \eta^{-1}$.
\end{theorem}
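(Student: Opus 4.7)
The plan is to extract from hypothesis \eqref{maximal:upperbound:sufficient} a family of ``witness sets'' $E(R) \subseteq R$ with controlled overlap along the order $\prec$, and then bound the Carleson sum over any $\mathcal{F} \subseteq \mathcal{E}$ by dualising through these sets. Concretely, for each $R \in \mathcal{E}$ let
\[
  E(R) := \{x \in R :\, h_{\mathcal{E}_{\succeq R}}(x) \leq \Lambda\},
\]
which by hypothesis satisfies $\mu(E(R)) \geq \eta\,\mu(R)$. It would then suffice to prove
\[
  \sum_{R \in \mathcal{F}} \mu(E(R)) \leq \Lambda\,\mu(\sh(\mathcal{F})) \qquad \text{for every } \mathcal{F} \subseteq \mathcal{E},
\]
since combining this with $\mu(E(R)) \geq \eta\,\mu(R)$ on the left-hand side and dividing by $\eta$ yields exactly $\sum_{R\in\mathcal{F}}\mu(R) \leq \Lambda\eta^{-1}\mu(\sh(\mathcal{F}))$, hence $\|\mathcal{E}\|_{\car(\mu)} \leq \Lambda\eta^{-1}$.

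To prove the displayed bound I would swap sum and integral and use that $E(R) \subseteq R \subseteq \sh(\mathcal{F})$, reducing it to the pointwise counting estimate
\[
  \sum_{R \in \mathcal{F}} \mathbbm{1}_{E(R)}(x) \leq \Lambda \qquad \text{for every } x \in \sh(\mathcal{F}).
\]
Fix such an $x$ and enumerate $\{R \in \mathcal{F}:\, x \in R\}$ in increasing $\prec$-order as $R_1 \prec R_2 \prec \dots \prec R_N$. If $x \in E(R_j)$, then on the one hand $h_{\mathcal{E}_{\succeq R_j}}(x) \leq \Lambda$, while on the other hand the sets $R_j, R_{j+1}, \dots, R_N$ all lie in $\mathcal{F}_{\succeq R_j} \subseteq \mathcal{E}_{\succeq R_j}$ and contain $x$, so $N - j + 1 \leq h_{\mathcal{E}_{\succeq R_j}}(x) \leq \Lambda$. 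Hence only indices $j \geq N - \Lambda + 1$ contribute, and there are at most $\lfloor\Lambda\rfloor \leq \Lambda$ of them.

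I do not anticipate any substantive obstacle; the proof is essentially a counting lemma whose only real content is the observation that one should enumerate the sets containing $x$ along $\prec$ and read the tail-count $h_{\mathcal{E}_{\succeq R_j}}$ from the bottom up, using that restricting from $\mathcal{E}$ to $\mathcal{F}$ only decreases the height function. Integrating the pointwise bound against $\mu$ on $\sh(\mathcal{F})$ closes the argument.
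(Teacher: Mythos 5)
Your proof is correct and follows essentially the same route as the paper: you define the same sets $E(R)=\{x\in R:\,h_{\mathcal{E}_{\succeq R}}(x)\leq\Lambda\}$, establish the pointwise overlap bound $\sum_R\mathbbm{1}_{E(R)}\leq\Lambda$ by counting along the order $\prec$ (the paper does this by looking at the $\prec$-minimal set $R$ with $x\in E(R)$, which is the same observation as your tail-count), and then integrate. The only cosmetic difference is that the paper packages the conclusion through the sparse constant, taking $\varphi_R=\mathbbm{1}_{E(R)}/\Lambda$ in Definition \ref{def:sparse}, whereas you bound the Carleson sum for each $\mathcal{F}\subseteq\mathcal{E}$ directly.
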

\begin{proof}
  We will show that $\|\mathcal{E}\|_{\sparse(\mu)} \geq \eta\Lambda^{-1}$.

  For each $R$ in $\mathcal{E}$ let $E(R) = \{x \in R:\, h_{\mathcal{E}_{\succeq R}}(x) \leq \Lambda\}$ and define
  the functions
  \begin{align*}
    \varphi_R = \frac{\mathbbm{1}_{E(R)}}{\Lambda}.
  \end{align*}
  By \eqref{maximal:upperbound:sufficient} we have
  \begin{align*}
    \int_R \varphi_R \,d\mu \geq \frac{\eta}{\Lambda}\mu(R).
  \end{align*}

  For any point $x$ in $\sh(\mathcal{E})$ let $\mathcal{B}(x) = \{S \in \mathcal{E}:\, x \in E(S)\}$, then
  \begin{align*}
    \sum_{R \in \mathcal{E}} \varphi_R = \frac{\#(\mathcal{B}(x))}{\Lambda}.
  \end{align*}
  So it suffices to show that $\mathcal{B}(x)$ has at most $\Lambda$ elements.

  Let $N = \#(\mathcal{B}(x))$, and let $R$ be the minimal element of $\mathcal{B}(x)$ with respect to $\prec$.
  Then obviously $h_{\mathcal{E}_{\succeq R}}(x) \geq N$. And since $x \in E(R)$, we must have $N \leq \Lambda$.

  Thus, the functions $\{\varphi_R\}$ satisfy the conditions of Definition \ref{def:sparse} and we are done.
\end{proof}

\begin{corollary} \label{maximal:coro}
  If $M_\eta < \infty$ and $A$ is the output constant of Algorithm \ref{maximal:algo} then
  \begin{align}\label{maximal:theorem:estimate}
    A \leq \|\mathcal{E}\|_{\car(\mu)} \leq 2(1-\eta)^{-1} M_\eta A.
  \end{align}
\end{corollary}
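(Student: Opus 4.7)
The plan is to obtain the corollary almost immediately by combining the iteration inequality \eqref{maximal:key_ineq} (which is built into the algorithm by Lemma \ref{maximal:selection_lemma}) with the abstract sparse-extraction result in Theorem \ref{maximal:general_thm}.

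For the lower bound $A \leq \|\mathcal{E}\|_{\car(\mu)}$, I would simply unwind the definition of $A$. The constant $A$ is the maximum of $\Lambda^{1,\infty}_\mu(\mathcal{F})$ over a sequence of subcollections $\mathcal{F} \subseteq \mathcal{E}$ visited by the algorithm, and by the Markov inequality \eqref{maximal:easy} one has $\Lambda^{1,\infty}_\mu(\mathcal{F}) \leq \Lambda_\mu(\mathcal{F}) \leq \|\mathcal{E}\|_{\car(\mu)}$ for every subcollection $\mathcal{F}$. Taking the supremum gives the claimed lower bound.

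For the upper bound, I would take the order $\prec$ produced by Algorithm \ref{maximal:algo} and read off the key inequality \eqref{maximal:key_ineq}. Since $A$ is defined as the running maximum of $\Lambda^{1,\infty}_\mu(\mathcal{E}_{\succeq R})$, we have $\Lambda^{1,\infty}_\mu(\mathcal{E}_{\succeq R}) \leq A$ for every $R$ in $\mathcal{E}$, and therefore
\begin{align*}
  \mu\bigl(\{x \in R:\, h_{\mathcal{E}_{\succeq R}}(x) \leq 2 M_\eta A\}\bigr) \geq (1-\eta)\mu(R)
\end{align*}
for each $R$. This is exactly the hypothesis \eqref{maximal:upperbound:sufficient} of Theorem \ref{maximal:general_thm} with parameters $\Lambda = 2 M_\eta A$ and sparsity level $1-\eta$. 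Applying that theorem yields $\|\mathcal{E}\|_{\car(\mu)} \leq 2 M_\eta A (1-\eta)^{-1}$.

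Since both ingredients are already in place, there is essentially no obstacle; the only thing to verify carefully is that the value of $\Lambda^{1,\infty}_\mu(\mathcal{E}_{\succeq R})$ appearing when $R$ is removed is indeed bounded by the final $A$ — this is true by construction because $A$ is updated monotonically each time the algorithm re-enters the while loop and before any element is selected for removal.
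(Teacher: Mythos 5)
Your proposal is correct and follows the paper's own route: the lower bound via $\Lambda^{1,\infty}_\mu(\mathcal{F}) \leq \Lambda_\mu(\mathcal{F}) \leq \|\mathcal{E}\|_{\car(\mu)}$, and the upper bound by feeding \eqref{maximal:key_ineq} (with $\Lambda^{1,\infty}_\mu(\mathcal{E}_{\succeq R}) \leq A$) into Theorem \ref{maximal:general_thm} with $\Lambda = 2M_\eta A$ and sparsity parameter $1-\eta$. No gaps to report.
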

\begin{proof}
  The lower bound is trivial from the definition of Carleson constant. The upper bound follows by combining \eqref{maximal:key_ineq} with
  Theorem \ref{maximal:general_thm}.
\end{proof}

The fact that we only really needed the weak-type bound in \eqref{maximal:selection_lemma:wt} allows us to prove Theorem \ref{weaktype11}:
\begin{proof}
  Suppose
  \begin{align*}
    \|h_{\mathcal{F}}\|_{L^{1,\infty}(\mu)} \leq C_0\mu(\mathcal{F})
  \end{align*}
  for all $\mathcal{F} \subseteq \mathcal{E}$. This means that at each iteration in the algorithm \eqref{maximal:selection_lemma:wt}
  holds with $\Lambda \leq C_0$, thus the constant $A$ output as a result is at most $C_0$ and the claim follows by Corollary \ref{maximal:coro}
\end{proof}

\section{Breaking up sparse collections}
\label{section:split}

We are now ready to the structure theorem  mentioned in the introduction.
\begin{theorem} \label{split:theorem}
  Let $\mathcal{E}$ be an arbitrary countable collection of sets with finite $\mu$-measure,
  and suppose the maximal operator $\mathcal{M}_{\mathcal{E}}^\mu$ satisfies \eqref{rwt} with constant $M = M_{\eta}$.

  Then for any $0 < \gamma < 1-\eta$ there exists a partition of $\mathcal{E}$ into at most
  \begin{align*}
    1+\frac{2M(1-\eta)}{1-\eta-\gamma}\|\mathcal{E}\|_{\car(\mu)}
  \end{align*}
  subcollections $\{\mathcal{E}_i\}$
  satisfying
  \begin{align*}
    \|\mathcal{E}_i\|_{\sparse(\mu)} \geq \gamma.
  \end{align*}

\end{theorem}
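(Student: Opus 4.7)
The strategy is to iteratively apply Algorithm \ref{maximal:algo} and peel off $\gamma$-sparse subfamilies from $\mathcal{E}$. Set the critical threshold $T := (1-\eta)/(2M_\eta \gamma)$. By Theorem \ref{maximal:general_thm} applied with outer threshold $2M_\eta T$ and inner parameter $1-\eta$, any subfamily $\mathcal{F}$ admitting a total order for which
\begin{align*}
  \mu\bigl(\bigl\{x \in R :\, h_{\mathcal{F}_{\succeq R}}(x) \leq 2M_\eta T \bigr\}\bigr) \geq (1-\eta)\mu(R) \qquad \text{for all } R \in \mathcal{F}
\end{align*}
holds is automatically $\gamma$-sparse. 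The plan is therefore to partition $\mathcal{E}$ into pieces of this form.

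At each iteration I would apply Algorithm \ref{maximal:algo} to the current residual $\widetilde{\mathcal{E}}$, obtaining the order $\prec$, sparsity sets $E(R)$, and values $\Lambda_R$ with $\max_R \Lambda_R \leq \|\widetilde{\mathcal{E}}\|_{\car(\mu)}$. I would then set $\mathcal{F} := \{R \in \widetilde{\mathcal{E}} : \Lambda_R \leq T\}$. For every $R \in \mathcal{F}$, the inclusion $E(R) \subseteq \{x \in R :\, h_{\widetilde{\mathcal{E}}_{\succeq R}}(x) \leq 2M_\eta \Lambda_R\}$ together with the monotonicity $h_{\mathcal{F}_{\succeq R}} \leq h_{\widetilde{\mathcal{E}}_{\succeq R}}$ and $\Lambda_R \leq T$ gives the level-set estimate displayed above, so $\mathcal{F}$ qualifies as a valid $\gamma$-sparse piece. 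I would record $\mathcal{F}$ as one subcollection of the partition, replace $\widetilde{\mathcal{E}}$ by $\widetilde{\mathcal{E}}\setminus\mathcal{F}$, and iterate.

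To recover the promised bound on the number of pieces, I would prove that each such extraction strictly reduces the Carleson constant of the residual by at least $c_0 := (1-\eta-\gamma)/(2M_\eta (1-\eta))$. Granting this, the procedure terminates after at most $1 + \|\mathcal{E}\|_{\car(\mu)}/c_0 = 1 + 2M_\eta (1-\eta)\|\mathcal{E}\|_{\car(\mu)}/(1-\eta-\gamma)$ iterations, which is exactly the asserted bound. The main obstacle is establishing this definite drop at each step: a priori, the residual may still contain many elements with $\Lambda_R > T$, so one must argue that removing just the small-$\Lambda$ part forces the weak-type ratios of every subfamily of the residual to shrink. The natural route is to use Corollary \ref{maximal:coro} to convert the Carleson constant of $\widetilde{\mathcal{E}}$ into the weak-type quantity $\Lambda^{1,\infty}_\mu$, and then to account quantitatively for the contribution of the removed $\mathcal{F}$ to the numerator $\|h_{\widetilde{\mathcal{E}}}\|_{L^{1,\infty}(\mu)}$; the factor $(1-\eta-\gamma)/(1-\eta)$ in $c_0$ should arise from balancing the $(1-\eta)$ efficiency of the algorithm's sparsity witnesses against the $\gamma$ sparsity required of each extracted piece.
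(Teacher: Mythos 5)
Your reduction of the sparsity of each extracted piece to Theorem \ref{maximal:general_thm} is sound: if $\mathcal{F}=\{R:\Lambda_R\le T\}$ with $T=(1-\eta)/(2M_\eta\gamma)$, the inherited order does satisfy the stated level-set bound, so $\|\mathcal{F}\|_{\sparse(\mu)}\ge\gamma$. The gap is exactly the step you flag, and it is not merely unproved but false in general: removing $\mathcal{F}$ need not decrease $\|\widetilde{\mathcal{E}}\|_{\car(\mu)}$ at all, because the Carleson constant is a supremum over subfamilies and the extremal subfamily may be disjoint from $\mathcal{F}$. Concretely, take $G$ pairwise disjoint unit intervals $I_1,\dots,I_G$ and let $\mathcal{E}$ consist of each $I_j$ repeated $K$ times; then \eqref{rwt} holds with $M_\eta\le\eta^{-1}$ uniformly in $G,K$, and $\|\mathcal{E}\|_{\car}=K$. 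In this example every set qualifies for removal at every step of Algorithm \ref{maximal:algo} (its $F(R)$ is all of $R$), so a legitimate execution may remove the copies of $I_G$, then $I_{G-1}$, and so on, leaving the copies of $I_1$ for last; along such an order one checks $\Lambda_R\ge K(g-1)/g\ge K/2$ until only copies of $I_1$ remain, so $\mathcal{F}=\{R:\Lambda_R\le T\}$ consists of at most roughly $T$ copies of $I_1$, while the residual still contains all $K$ copies of each of $I_2,\dots,I_G$ and hence still has Carleson constant exactly $K$: zero drop. Iterating, your procedure can produce on the order of $GK/T$ pieces, which exceeds any bound of the form $C(M,\eta,\gamma)\,\|\mathcal{E}\|_{\car(\mu)}$ once $G$ is large. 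Since you impose no rule on how the algorithm's nondeterminism is resolved, and no per-step rule obviously forces the removed elements to hit all near-extremal subfamilies, the peeling strategy as described cannot yield the theorem. A secondary defect: since $\Lambda_R\ge 1$ always, your threshold satisfies $T<1$ whenever $\gamma>(1-\eta)/(2M_\eta)$, in which case $\mathcal{F}$ is empty and the iteration does not even advance, although the theorem allows every $\gamma<1-\eta$.

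The paper avoids iterated peeling altogether. It runs Algorithm \ref{maximal:algo} once and reverses the removal order to obtain \eqref{split:good_order}, i.e. $\mu(\{x\in R:\ h_{\mathcal{E}_{\le R}}(x)>2M\|\mathcal{E}\|_{\car(\mu)}\})\le\eta\mu(R)$ for all $R$; it then inserts the sets one at a time, in increasing order, into $N$ buckets with $N\ge\alpha^{-1}2M\|\mathcal{E}\|_{\car(\mu)}$ and $\alpha=(1-\eta-\gamma)/(1-\eta)$, always choosing a bucket $\mathcal{E}_i$ with $\mu(R\cap\sh(\mathcal{E}_i))\le(1-\gamma)\mu(R)$. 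If no bucket accepted $R$, an averaging argument shows that more than an $\eta$-fraction of $R$ would be covered by at least $\alpha N$ bucket shadows, and since all previously placed sets precede $R$ in the order this contradicts \eqref{split:good_order}. Each bucket is then $\gamma$-sparse directly from the $(P_\gamma)$-type condition \eqref{split:condition} (the sets $R\setminus\sh(\mathcal{E}_i)$ at insertion time are pairwise disjoint), with no further appeal to Theorem \ref{maximal:general_thm}. In short, the correct mechanism is a pigeonhole/coloring argument performed along the algorithm's order, not a quantitative drop of the residual's Carleson constant; to salvage your approach you would need some such global control of the order, which your sketch does not provide.
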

\begin{proof}
  After applying Algorithm \ref{maximal:algo} and reversing the order, one obtains a total order $<$ on $\mathcal{E}$ such that
  \begin{align} \label{split:good_order}
    \mu(\{x \in R:\, h_{\mathcal{E}_{\leq R}}(x) > 2 M \|\mathcal{E}\|_{\car(\mu)}\}) \leq \eta \mu(R)
  \end{align}
  for all $R \in \mathcal{E}$.

  Create $N$ empty \emph{buckets} $\{\mathcal{E}_1, \dots, \mathcal{E}_N\}$, where $N$ is a large integer to be chosen later.
  These buckets will be constructed by iteratively inserting elements from $\mathcal{E}$.

  We start with the smallest (with respect to $<$) element in $\mathcal{E}$, which we can insert into an arbitrary bucket, say $\mathcal{E}_1$.
  Let $R$ be any set in $\mathcal{E}$ and assume that we have
  placed all the previous sets $S < R$ in such a way that for all $i$ and $R \in \mathcal{E}_i$:
  \begin{align} \label{split:condition}
    \mu(S \cap \sh(\mathcal{E}_i)) \leq (1-\gamma)\mu(S).
  \end{align}
  We now show that one can also place $R$ into at least one of the buckets while maintaing \eqref{split:condition} with $R$ instead of $S$. Indeed, suppose \eqref{split:condition}
  fails for all the $N$ buckets. Then, for $A_i = \sh(\mathcal{E}_i) \cap R$, we have
  \begin{align} \label{split:big_subsets}
    \mu(A_i) > (1-\gamma)\mu(R)
  \end{align}
  for all $i \in \{1, \dots, N\}$. Set
  \begin{align} \label{split:alpha}
    \alpha = \frac{1-\eta-\gamma}{1-\eta}
  \end{align}
  and let $U = \{x \in R:\, \sum_{i=1}^N \mathbbm{1}_{A_i} \geq \alpha N\}$.
  We will show that
  \begin{align} \label{split:too_big}
    \mu(U) > \eta\mu(R).
  \end{align}
  This will contradict \eqref{split:good_order} if $N \geq \alpha^{-1}2M\|\mathcal{E}\|_{\car(\mu)}$ since we would have
  \begin{align*}
    \{x \in R:\, h_{\mathcal{E}_{\leq R}}(x) > 2 M \|\mathcal{E}\|_{\car(\mu)}\} \supseteq U.
  \end{align*}

  To estimate $\mu(U)$ it is easier to bound the measure of the complement $V = R \setminus U$. If $x$ is in fewer than $\alpha N$ of the subsets $\{A_i\}$, then
  $x$ is in at least $(1-\alpha)N$ of the subsets $\{R \setminus A_i\}$. Thus
  \begin{align*}
    \mu(V) &= \mu\Biggl( \Biggl\{ x \in R:\, \sum_{i=1}^N \mathbbm{1}_{A_i^c} > (1-\alpha)N \Biggr\} \Biggr) \\
        &\leq \frac{1}{(1-\alpha)N} \int_R  \sum_{i=1}^N \mathbbm{1}_{A_i^c}\, d\mu \\
        &= \frac{1}{(1-\alpha)N} \sum_{i=1}^N \mu(R \setminus A_i).
  \end{align*}
  By \eqref{split:big_subsets} we have $\mu(R \setminus A_i) < \gamma \mu(R)$, so
  \begin{align*}
    \mu(V) &< \frac{1}{(1-\alpha)N}\sum_{i=1}^N \gamma\mu(R) \\
           &= \frac{\gamma}{1-\alpha}\mu(R).
  \end{align*}
  Thus, with our choice of $\alpha$ in \eqref{split:alpha}:
  \begin{align*}
    \mu(U) &= \mu(R) - \mu(V) \\
           &> \mu(R) \Bigl( 1 - \frac{\gamma}{1-\alpha} \Bigr) \\
           &= \mu(R) \Bigl( \frac{1-\alpha-\gamma}{1-\alpha} \Bigr) \\
           &= \eta \mu(R),
  \end{align*}
  which is our contradiction.

  Finally, it remains to chose $N$, but this is easy as the smallest integer $N \geq \alpha^{-1} 2 M \|\mathcal{E}\|_{\car(\mu)}$ will suffice.
\end{proof}

\begin{remark} \label{p1remark}
  We would like to note here that Theorem \ref{split:theorem} proves that every Carleson collection of axis-parallel rectangles (or sets for which the associated
  maximal function satisfies \eqref{rwt}) can be decomposed into finitely many collections of type $(P_1)$ in the nomenclature of \cite{cf}. Recall that a sequence $\{R_1, R_2, \dots\}$ is
  of type $(P_1)$ if for every $n \geq 1$
  \begin{align*}
    |R_{n+1} \setminus (R_1 \cup \dots \cup R_n)| \geq \frac{1}{2} |R_{n+1}|.
  \end{align*}
  Observe that, in the proof of Theorem \ref{split:theorem}, the buckets $\mathcal{E}_i$ satsify \eqref{split:condition} which is exactly the $(P_1)$ condition when $\gamma = \frac{1}{2}$.
\end{remark}

We now show that the structure theorem is not true in general.
\begin{theorem} \label{split:counter1}
  For every $\Lambda \geq 2$ and every integer $N \geq 1$ there exists a collection $\mathcal{E}$ of subsets of $\mathbb{R}$ with $\|\mathcal{E}\|_{\car} \leq \Lambda$
  such that for any partition
  \begin{align*}
    \mathcal{E} = \mathcal{E}_1 \cup \dots \cup \mathcal{E}_N
  \end{align*}
  there exists at least one $i \in \{1, \dots, N\}$ for which $\|\mathcal{E}_i\|_{\car} \geq \frac{1}{2}\Lambda$.
\end{theorem}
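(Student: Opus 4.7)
The plan is to give an explicit ``flower'' example. Set $\epsilon = 1/(\Lambda-1)$, take $I_0 = [0,1]$, and let $J_1,\dots,J_K$ be pairwise disjoint intervals of length $\epsilon$, all disjoint from $I_0$ (for instance $J_k = [1 + (k-1)\epsilon,\, 1 + k\epsilon]$). Define $R_k = I_0 \cup J_k$ and set $\mathcal{E} = \{R_1,\dots,R_K\}$, with $K$ to be fixed in terms of $N$ and $\Lambda$ at the end.

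The first step is to identify the Carleson ratio of an arbitrary subcollection. Because the petals are pairwise disjoint and disjoint from $I_0$, any subfamily $\mathcal{F}\subseteq\mathcal{E}$ of cardinality $k$ satisfies $\sum_{R\in\mathcal{F}}|R| = k(1+\epsilon)$ and $|\sh(\mathcal{F})| = 1 + k\epsilon$. Hence the ratio of every subcollection is a function of its size alone:
\[
  \phi(k) := \frac{k(1+\epsilon)}{1+k\epsilon} = \frac{k\Lambda}{k+\Lambda-1},
\]
which is strictly increasing in $k$ and uniformly bounded above by $\Lambda$. It follows that $\|\mathcal{E}\|_{\car} = \phi(K) < \Lambda$, so the upper bound on the Carleson constant is automatic for every $K$.

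The second step combines pigeonhole with the monotonicity of $\phi$. Given any partition $\mathcal{E} = \mathcal{E}_1\cup\dots\cup\mathcal{E}_N$, some part $\mathcal{E}_{i^*}$ has cardinality at least $\lceil K/N\rceil$, and taking that whole part as a witness yields $\|\mathcal{E}_{i^*}\|_{\car} \geq \phi(\lceil K/N\rceil)$. The choice of $K$ is then dictated by the clean algebraic identity $\phi(\Lambda-1) = \Lambda/2$: taking $K = \lceil N(\Lambda-1)\rceil$ forces $\lceil K/N\rceil \geq \Lambda-1$, and monotonicity gives $\|\mathcal{E}_{i^*}\|_{\car} \geq \phi(\Lambda-1) = \Lambda/2$, as required.

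The argument reduces to elementary algebra once the construction is in place, so there is no serious technical obstacle. The conceptual core is finding a family that is \emph{Carleson-robust under subfamily extraction}: a single scalar---the cardinality of the chosen subcollection---must control its ratio uniformly. The flower achieves this by design, and the identity $\phi(\Lambda-1) = \Lambda/2$ automatically calibrates the cardinality threshold at which the bound $\Lambda/2$ is enforced, independently of how large $N$ is.
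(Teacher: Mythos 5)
Your proposal is correct and is essentially the paper's own argument: the same ``flower'' collection $R_k=[0,1)\cup[\text{petal of length }(\Lambda-1)^{-1}]$, the same observation that every size-$k$ subfamily has ratio $\frac{k\Lambda}{k+\Lambda-1}\leq\Lambda$, and the same pigeonhole step on a partition into $N$ parts. The only (cosmetic) differences are that you make the cardinality threshold explicit via $\phi(\Lambda-1)=\Lambda/2$ where the paper just takes $M$ large, and you should use half-open petals so they are genuinely pairwise disjoint.
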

\begin{proof}
  Fix a large integer $M$ to be chosen later. For any integer $m \geq 1$ define the sets
  \begin{align*}
    R_m = [0,1) \cup [m, m + (\Lambda-1)^{-1}).
  \end{align*}

  If $\mathcal{F}$ is any non-empty subcollection of $\{R_1, R_2, \dots\}$ then
  \begin{align*}
    |\sh(\mathcal{F})| &= 1 + \#(\mathcal{F})(\Lambda-1)^{-1}.
  \end{align*}
  For each $m \geq 1$ let $E(R_m) = [m, m + (\Lambda-1)^{-1})$, then the collection $\{E(R_m)\}$ is pairwise-disjoint and
  \begin{align*}
    \frac{|E(R_m)|}{|R_m|} = \frac{(\Lambda-1)^{-1}}{1+(\Lambda-1)^{-1}} = \frac{1}{\Lambda}.
  \end{align*}
  These two facts mean that the collection $\{R_0, \dots, R_{M-1}\}$ is $\Lambda^{-1}$-sparse, and hence $\|\mathcal{E}\|_{\car} \leq \Lambda$.

  Now let $\mathcal{E}_1 \cup \dots \cup \mathcal{E}_N$ be any partition of $\mathcal{E}$.
  Since $\#\mathcal{E} = M$, there must exist an $i \in \{1, \dots, N\}$ such that $\# \mathcal{E}_i \geq \frac{M}{N}$.
  For this family we have
  \begin{align*}
    \|\mathcal{E}_i\|_{\car} \geq \Lambda_\mu(\mathcal{E}_i) &= \frac{\#\mathcal{E}_i(1 + (\Lambda-1)^{-1})}{1+\#\mathcal{E}_i (\Lambda-1)^{-1}}.
  \end{align*}
  When $M$ is sufficiently large (depending only on $N$ and $\Lambda$) we have
  \begin{align*}
    \frac{\#\mathcal{E}_i(1 + (\Lambda-1)^{-1})}{1+\#\mathcal{E}_i (\Lambda-1)^{-1}} &\geq \frac{1 + (\Lambda-1)^{-1}}{2(\Lambda-1)^{-1}} \\
    &= \frac{\Lambda}{2},
  \end{align*}
  which is what we wanted.
\end{proof}

One may wonder whether one can improve matters by imposing additional geometry on the sets contained in $\mathcal{E}$.
For example, when $\mathcal{E}$ consists of dyadic rectangles in $\mathbb{R}^d$ then Theorem \ref{split:theorem} applies.
However, if one is allowed to change the measure then we can construct an example that behaves like the one in Theorem \ref{split:counter1}.

The construction, which has essentially the same behavior as that of Theorem \ref{split:counter1}, is similar to one used
by R. Fefferman in \cite{fefferman_strong_diff}.
\begin{theorem}
  There exists a measure $\mu$ on $\mathbb{R}^d$ such that for any integers $N \geq 1$ and $\Lambda \geq 2$ there exists
  a finite collection $\mathcal{E}$ of dyadic rectangles with $\|\mathcal{E}\|_{\car(\mu)} \leq \Lambda$ such that any
  partition into $N$ subfamilies has at least one with Carleson constant $\geq \frac{1}{2}\Lambda$.
\end{theorem}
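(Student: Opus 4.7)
The plan is to imitate the 1-dimensional construction of Theorem \ref{split:counter1} using 2-parameter dyadic rectangles against a carefully chosen discrete measure. The point is that axis-parallel dyadic rectangles in two or more parameters can share a common point without being totally ordered by inclusion -- this is the same freedom that lets the strong maximal function fail weak-type bounds against suitable singular measures, in the spirit of \cite{fefferman_strong_diff}.

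I work in $\mathbb{R}^2$; the case $d \geq 2$ follows by trivially ignoring the extra coordinates. Enumerate the countably many pairs of integers $(\Lambda, N)$ with $\Lambda \geq 2$ and $N \geq 1$ as $\{(\Lambda_k, N_k)\}_{k \geq 1}$ and set $M_k := N_k(\Lambda_k - 1)$. Choose integers $a_k \geq M_k$ growing fast enough that the strips $S_k := [2^{a_k}, 2^{a_k} + 2^{M_k}) \times [0, 1)$ are pairwise disjoint. Inside each $S_k$ place the dyadic rectangles
\begin{align*}
  R^{(k)}_m := [2^{a_k}, 2^{a_k} + 2^m) \times [0, 2^{-m}), \qquad m = 1, \ldots, M_k,
\end{align*}
which are genuinely dyadic because $a_k \geq m$. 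A direct check shows that the $R^{(k)}_m$ all share the \emph{core point} $c_k := (2^{a_k}, 0)$, while each difference $R^{(k)}_m \setminus \bigcup_{n \neq m} R^{(k)}_n$ is a non-empty dyadic rectangle -- for instance $[2^{a_k} + 2^{m-1}, 2^{a_k} + 2^m) \times [2^{-(m+1)}, 2^{-m})$ when $1 < m < M_k$ -- from which one picks a \emph{tag point} $p_{k,m}$. Define
\begin{align*}
  \mu := \sum_{k \geq 1} \Biggl( (\Lambda_k - 1)\,\delta_{c_k} + \sum_{m=1}^{M_k} \delta_{p_{k,m}} \Biggr).
\end{align*}

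Given $\Lambda$ and $N$, let $k$ be the index with $(\Lambda_k, N_k) = (\Lambda, N)$ and take $\mathcal{E} := \{R^{(k)}_m\}_{m=1}^{M_k}$. Because the strips $S_j$ are pairwise disjoint, only the masses at $c_k$ and $\{p_{k,m}\}$ contribute to $\mu$-computations on $\mathcal{E}$; hence $\mu(R^{(k)}_m) = (\Lambda - 1) + 1 = \Lambda$ for every $m$, and $\mu(\sh(\mathcal{F})) = (\Lambda - 1) + \#\mathcal{F}$ for every non-empty $\mathcal{F} \subseteq \mathcal{E}$. The arithmetic is now identical to the one closing Theorem \ref{split:counter1}: the Carleson ratio of $\mathcal{F}$ is $\#\mathcal{F}\cdot \Lambda / ((\Lambda - 1) + \#\mathcal{F}) < \Lambda$, so $\|\mathcal{E}\|_{\car(\mu)} \leq \Lambda$; and for any partition into $N$ subfamilies, pigeonholing produces $\mathcal{E}_i$ with $\#\mathcal{E}_i \geq M_k / N = \Lambda - 1$, which forces $\|\mathcal{E}_i\|_{\car(\mu)} \geq \Lambda/2$.

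The main obstacle is that $\mu$ has to be a single, universal measure while the core mass $\Lambda - 1$ demanded by each module depends on $\Lambda$; this is resolved by giving each pair $(\Lambda, N)$ its own module, shifted so far along the first coordinate that all modules sit in pairwise disjoint strips yet each rectangle remains dyadic -- the rapid growth of $a_k$ secures both requirements simultaneously. As a consistency check, the resulting $\mu$ must fail \eqref{rwt} for every $\eta < 1$, since otherwise Theorem \ref{split:theorem} would yield precisely the partition we are ruling out.
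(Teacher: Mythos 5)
Your proof is correct and takes essentially the same approach as the paper: a ``staircase'' of dyadic rectangles all containing a single heavy atom, with unit atoms placed in their pairwise-disjoint private parts, so that the computation reduces to the arithmetic of Theorem \ref{split:counter1}. The only difference is cosmetic, namely how universality of $\mu$ over $(\Lambda,N)$ is arranged: you give each pair its own horizontally translated finite module of exactly $N(\Lambda-1)$ rectangles with core mass $\Lambda-1$, whereas the paper keeps one infinite staircase per row $j$ with core mass $1$ and tag masses $(1+j)^{-1}$, uses row $j=\Lambda-2$, and takes the subfamily sufficiently large at the end.
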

\begin{proof}
  For integers $m$ and $j$ consider the dyadic rectangles
  \begin{align*}
    R_m^j = [0, 2^m) \times [j, j+2^{-m})
  \end{align*}
  and let $\mathcal{S}^j = \{R_m^j:\, m \geq 0\}$. Define also
  \begin{align*}
    E(R_m^j) = [2^{m-1}, 2^m) \times [j + 2^{-m-1}, j+2^{-m}).
  \end{align*}
  Observe that the sets $\{E(R_m^j)\}$ are pairwise-disjoint.

  Choose any set of points $\{x_m^j\}$ such that $x_m^j \in E(R_m^j)$ for every non-negative $m$ and $j$.
  Then define the measure
  \begin{align*}
    \mu = \sum_{j=0}^\infty \Bigl( \delta_{(0,j)} + \sum_{m=0}^\infty (1+j)^{-1}\delta_{x_m^j} \Bigr).
  \end{align*}

  \begin{figure}[H]
    \begin{tikzpicture}[scale=0.5]
      \draw[thick,->] (0,0) -- (16,0);
      \draw[thick,->] (0,0) -- (0,12);
      \node at  (-0.5, 0) {$0$};
      \node at  (-0.5, 5) {$1$};
      \node at  (-0.5, 10) {$2$};

      \node at  (14, 0.5) {$\hdots$};
      \node at  (14, 5.5) {$\hdots$};
      \node at  (2.5, 11) {$\vdots$};

      \draw[draw=black] (0,0) rectangle (05, 05);
      \draw[draw=black] (0,0) rectangle (07, 04);
      \draw[draw=black] (0,0) rectangle (09, 03);
      \draw[draw=black] (0,0) rectangle (11, 02);
      \draw[draw=black] (0,0) rectangle (13, 01);

      \filldraw (00,00) circle [radius=0.05];

      \filldraw (4.9, 4.9) circle [radius=0.05];
      \node at  (4.5, 4.5) {$x_0^0$};

      \filldraw (6.9, 3.9) circle [radius=0.05];
      \node at  (6.5, 3.5) {$x_1^0$};

      \filldraw (8.9, 2.9) circle [radius=0.05];
      \node at  (8.5, 2.5) {$x_2^0$};

      \filldraw (10.9, 1.9) circle [radius=0.05];
      \node at  (10.5, 1.5) {$x_3^0$};

      \filldraw (12.9, 0.9) circle [radius=0.05];
      \node at  (12.5, 0.5) {$x_4^0$};

      \draw[draw=black] (0,5) rectangle (05, 10);
      \draw[draw=black] (0,5) rectangle (07, 09);
      \draw[draw=black] (0,5) rectangle (09, 08);
      \draw[draw=black] (0,5) rectangle (11, 07);
      \draw[draw=black] (0,5) rectangle (13, 06);

      \filldraw (00, 05) circle [radius=0.05];

      \filldraw (4.9, 9.9) circle [radius=0.05];
      \node at  (4.5, 9.5) {$x_0^1$};

      \filldraw (6.9, 8.9) circle [radius=0.05];
      \node at  (6.5, 8.5) {$x_1^1$};

      \filldraw (8.9, 7.9) circle [radius=0.05];
      \node at  (8.5, 7.5) {$x_2^1$};

      \filldraw (10.9, 6.9) circle [radius=0.05];
      \node at  (10.5, 6.5) {$x_3^1$};

      \filldraw (12.9, 5.9) circle [radius=0.05];
      \node at  (12.5, 5.5) {$x_4^1$};
    \end{tikzpicture}
    \caption{The first few rectangles $\{R_m^j\}$ and points $\{x_m^j\}$ (not to scale).}
    \label{fig:stairs}
  \end{figure}
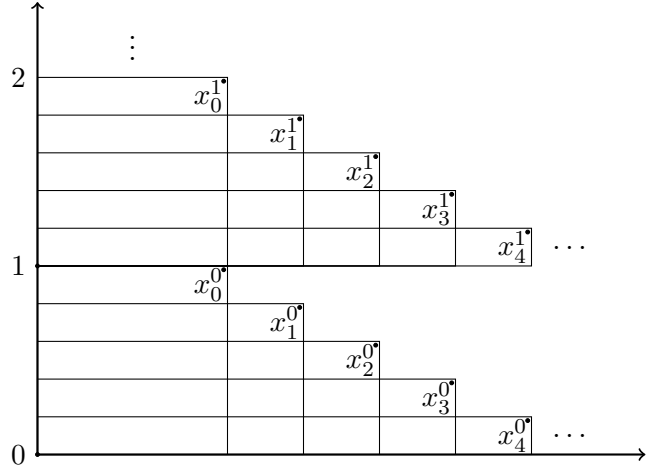

  With this measure we have
  \begin{align*}
    \mu(R_m^j) = 1 + (1+j)^{-1} \quad \text{and} \quad \mu(E(R_m^j)) = (1+j)^{-1},
  \end{align*}
  for all $m$ and $j$.
  As in the proof of Theorem \ref{split:counter1}, for any finite collection $\mathcal{F} \subset \mathcal{S}^j$ we have
  \begin{align}
    \|\mathcal{F}\|_{\car(\mu)} &\leq \frac{1+(1+j)^{-1}}{(1+j)^{-1}} = 2 + j \label{split:counter2:ub} \\
    \|\mathcal{F}\|_{\car(\mu)} &\geq \frac{\#(\mathcal{F})(1+(1+j)^{-1})}{1 + \#(\mathcal{F})(1+j)^{-1}} = \frac{\#(\mathcal{F})(2+j)}{1+j + \#(\mathcal{F})}. \label{split:counter2:lb}
  \end{align}

  Let $M$ be a large integer and take any subset $\mathcal{E}$ from $\mathcal{S}^{\Lambda-2}$ with $\#(\mathcal{E}) = M$.
  By \eqref{split:counter2:ub} we can bound the Carleson constant of $\mathcal{E}$ by $\Lambda$.
  Suppose $\{\mathcal{E}_i\}$ is any partition of $\mathcal{E}$ into $N$ subfamilies. There must exist at least
  one subfamily, say $\mathcal{E}_i$, with $\#(\mathcal{E}_i) \geq M/N$. For this subfamily we have
  by \eqref{split:counter2:lb}
  \begin{align*}
    \|\mathcal{E}_i\|_{\car(\mu)} &\geq \frac{\frac{M}{N}\Lambda}{\Lambda-1 + \frac{M}{N}} \\
    &= \frac{M\Lambda}{N(\Lambda-1) + M}.
  \end{align*}
  The claim follows by taking $M$ so large that
  \begin{align*}
    \frac{M\Lambda}{N(\Lambda-1) + M} \geq \frac{\Lambda}{2}.
  \end{align*}
\end{proof}

\bibliography{bibliography}
\bibliographystyle{abbrv}

\end{document}